\def\frk{\mathfrak}               % font for "Fraktur"
\def\Phi{{\frk N}}
\def\opn#1#2{\def#1{\operatorname{#2}}} % to make operators
\opn\chara{char} \opn\length{\ell} \opn\pd{pd} \opn\rk{rk}
\opn\projdim{proj\,dim} \opn\injdim{inj\,dim} \opn\rank{rank}
\opn\depth{depth} \opn\grade{grade} \opn\height{height}
\opn\embdim{emb\,dim} \opn\codim{codim}
\opn\Tr{Tr} \opn\bigrank{big\,rank}
\opn\superheight{superheight}\opn\lcm{lcm}
\opn\trdeg{tr\,deg}%\emph{
\opn\reg{reg} \opn\lreg{lreg} \opn\ini{in} \opn\lpd{lpd}
\opn\size{size}\opn{\mult}{mult}
\opn\div{div} \opn\Div{Div} \opn\cl{cl} \opn\Cl{Cl}
\opn\Spec{Spec} \opn\Supp{Supp} \opn\supp{supp} \opn\Sing{Sing}
\opn\Ass{Ass} \opn\Min{Min}
\opn\Ann{Ann} \opn\Rad{Rad} \opn\Soc{Soc}
\opn\Syz{Syz} \opn\Im{Im} \opn\Ker{Ker} \opn\Coker{Coker}
\opn\Am{Am} \opn\Hom{Hom} \opn\Tor{Tor} \opn\Ext{Ext}
\opn\End{End} \opn\Aut{Aut} \opn\id{id} \opn\ini{in}
\opn\nat{nat}
\opn\pff{pf}%   \pf exists already
\opn\Pf{Pf} \opn\GL{GL} \opn\SL{SL} \opn\mod{mod} \opn\ord{ord}
\opn\Gin{Gin}
\opn\Hilb{Hilb}\opn\adeg{adeg}\opn\std{std}\opn\ip{infpt}
\opn\Pol{Pol}
\opn\sat{sat}
\opn\Var{Var}
\opn\Gen{Gen}
\opn\aff{aff} \opn\con{conv} \opn\relint{relint} \opn\st{st}
\opn\lk{lk} \opn\cn{cn} \opn\core{core} \opn\vol{vol}
\opn\link{link} \opn\star{star}
\opn\gr{gr}
\def\pot#1#2{#1[\kern-0.28ex[#2]\kern-0.28ex]}
\opn\dirlim{\underrightarrow{\lim}}
\opn\inivlim{\underleftarrow{\lim}}
\def\Implies{\ifmmode\Longrightarrow \else
        \unskip${}\Longrightarrow{}$\ignorespaces\fi}
\def\implies{\ifmmode\Rightarrow \else
        \unskip${}\Rightarrow{}$\ignorespaces\fi}
\def\iff{\ifmmode\Longleftrightarrow \else
        \unskip${}\Longleftrightarrow{}$\ignorespaces\fi}
\newtheorem{Theorem}{Theorem}[section]
\newtheorem{Lemma}[Theorem]{Lemma}
\newtheorem{Corollary}[Theorem]{Corollary}
\newtheorem{Remark}[Theorem]{Remark}
\newtheorem{Example}[Theorem]{Example}
\newtheorem{Definition}[Theorem]{Definition}
\let\epsilon\varepsilon
\let\phi=\varphi
\let\kappa=\varkappa
\def\qed{\ifhmode\textqed\fi
      \ifmmode\ifinner\quad\qedsymbol\else\dispqed\fi\fi}
\def\textqed{\unskip\nobreak\penalty50
       \hskip2em\hbox{}\nobreak\hfil\qedsymbol
       \parfillskip=0pt \finalhyphendemerits=0}
\def\dispqed{\rlap{\qquad\qedsymbol}}
\opn\dis{dis}
\def\pnt{{\raise0.5mm\hbox{\large\bf.}}}
\opn\Lex{Lex}
\begin{document}

\UseRawInputEncoding

\title{Characterization and Newton Complementary Dual of Quasi $f$-Ideals  $^{*}$  }
%%\date{~~}

\author{ F. U. Rehman $^{1}$, H. Hasan $^{1}$, H. Mahmood $^{1}$, M. A. Binyamin $^{2}$
}
\thanks{\noindent $^{*}$ The third and fourth authors are supported by the Higher Education Commission of Pakistan for this research (Grant no. 7515).\\
\noindent $^{1}$Government College University Lahore, Pakistan.
$^{2}$ Government College University  Faisalabad, Pakistan.\\
{\em E-mails }: fazalqau@gmail.com, hasanmahmood@gcu.edu.pk, halahasan8783@gmail.com,
 ahsanbanyamin@gmail.com  }
\maketitle
\begin{abstract}
 The notion of quasi $f$-ideals was first presented in $[14]$ which generalize the idea of $f$-ideals.  In this paper, we give the complete characterization of quasi $f$-ideals of degree greater or equal to $2$. Additionally, we show that the property of being quasi $f$-ideals remain the same after taking the Newton complementary dual of a squarefree monomial ideal $I$ provided that the minimal generating set of $I$ is perfect.

 \vskip 0.4 true cm
\noindent
  {\it Key words: }  $f$-vector; facet complex; Stanley-Reisner complex; quasi $f$-ideal;\\
   {\it 2010 Mathematics Subject Classification}:\ \ \ 13F20, 05E45, 13F55, 13C14.\\
\end{abstract}

\section{Introduction}
Throughout this paper,  $F$ is a field and $R=F[x_{1},x_{2,}...,x_{n}]$ is a polynomial ring with $n$ indeterminate. Any squarefree monomial ideal $I\subset R$ can be associated to two different simplicial complexes over the finite set of vertices, denoted by $\delta _{\mathcal{F}}(I)$ and $\delta _{\mathcal{N}}(I)$, called the facet complex of $I$ and the non-face complex (or Stanley-Reisner complex) of $I$ respectively. The $f$-vectors of these two simplicial complexes $\delta _{\mathcal{F}}(I)$ and $\delta _{\mathcal{N}}(I)$ have the accompanying prospects :\\1) $ f(\delta _{\mathcal{F}}(I))= f(\delta _{\mathcal{N}}(I))$ or\\ 2) $f(\delta _{\mathcal{F}}(I))\neq  f(\delta _{\mathcal{N}}(I)) $ but $dim(\delta _{\mathcal{F}}(I))= dim(\delta _{\mathcal{N}}(I))$ or \\ 3)  $f(\delta _{\mathcal{F}}(I))\neq  f(\delta _{\mathcal{N}}(I)) $ but $dim(\delta _{\mathcal{F}}(I))\neq  dim(\delta _{\mathcal{N}}(I))$ \\ A squarefree monomial ideal $I\subset R$ with property that mention in (1) is called an $f$-ideal of the polynomial ring $R$. This notion has been studied for various properties of $f$-ideals relevant to combinatorial commutative algebra in the papers $[3]$, $[4]$, $[11]$, $[12]$, and $[13]$. If we look at (1) and (2) collectively, there is one thing common, i.e, dimensions of both $\delta _{\mathcal{F}}(I)$ and $\delta _{\mathcal{N}}(I)$ are same. This means that the $f$-vectors of $\delta _{\mathcal{F}}(I)$ and $\delta _{\mathcal{N}}(I)$ can be added or subtracted usually. In this paper we will think just those squarefree monomial ideal $I$ in the polynomial ring $R=F[x_{1},x_{2,}...,x_{n}]$ with properties that $dim(\delta _{\mathcal{F}}(I))=s=dim(\delta _{\mathcal{N}}(I))$ and $f(\delta _{\mathcal{N}}(I))- f(\delta _{\mathcal{F}}(I))=\left(a_1,a_2,\ldots,a_s\right)$, we call it a quasi $f$-ideal of type $(a_1,a_2,\ldots,a_s)$. It is noted that if $a_{i}=0$ for all $i=1,2,...,r$, then clearly we have the property (1), and equivalently saying that every $f$-ideal is a quasi $f$-ideal of type $\mathbf{0}$-vector. It is natural to ask: Is it possible to characterize all the squarefree monomial quasi $f$-ideals? Mahmood. H. et al, classified all the pure squarefree monomial quasi $f$-ideals of degree $2$ in two different approaches in $[14]$. In this paper, we characterize all quasi $f$-ideals of degree $d\geq 2$. Moreover, we extend this class of quasi $f$-ideal regarding its Newton complementary dual. In 2013, during the study of Cremona maps; Costa and Simis $[5]$ introduced the notion of the Newton complementary dual in general context. After that, Doria and Simis $[6]$ were examined different properties of Newton complementary dual. Ansaldi, Lin, and Shin $[2]$ investigated the Newton complementary duals of monomial ideals.\\

\indent In this paper, we use the following outlines. In section 2, we give essential concepts that help in the expected results. Section 3 focuses on the primary study of quasi $f$-ideals; Theorem 3.2 characterizes all pure squarefree quasi $f$-ideals of degree $d\geq 2$, and Theorem 3.2 gives the formulae to compute the Hilbert function and Hilbert series of the polynomial ring $R$ modulo squarefree monomial quasi $f$-ideal $I$. The fourth section of this paper is devoted to the Newton complementary duals of quasi $f$-ideals. We prove that an ideal $I$ is a quasi $f$-ideal if and only if $\widehat{I}$ is a quasi $f$-ideal provided that $G(I)$ is perfect, Theorem 4.5.

\section{Basic Set Up}
Let us review some fundamental ideas to get familiar with simplicial complex and squarefree monomial ideals.
Let $V=\{v_1,v_2,\ldots,v_n\}$ be a vertex set. A subset $\Delta$ of $P(V)$ is said to be a simplicial complex on $V$ if and only if each one point subset of $V$ lies in $\Delta$ and, if $E$ is any subset of  $F\in \Delta$, then $E\in \Delta$. Note that, each element of $\Delta$ is known as face and the maximal faces under $\subseteq$ are known as facets. For any face $F$ of $\Delta$, the dimension of $F$ is given by $dim(F)=|F|-1$. Note that $\emptyset\in \Delta$ and $dim(\emptyset)=-1$. A simplicial complex $\Delta$ is said to be pure if all of its facets have the same dimension. The dimension of $\Delta$ is the maximum of the dimension of all facets of $\Delta$. If $\Delta$ is a $d$-dimensional simplicial complex on $V$, then the $f$-vector of $\Delta$ is the $d+2$ tuple $$f(\Delta)=(f_{-1}(\Delta), f_{0}(\Delta),f_{1}(\Delta),...,f_{d}(\Delta))$$ where $f_{-1}(\Delta)=1$ and $f_{i}(\Delta)=\left\vert \left\{ F\in \Delta :\dim (F)=i\right\}\right\vert $, $0\leq i\leq d$,. Note that in $d$-dimensional simplicial complex $\Delta$, we may assume if needed $f_{i}(\Delta)=0$  for $i>d$. This means that the $f$-vector of simplicial complex $\Delta$ can be written as $f(\Delta)=(f_{-1}(\Delta), f_{0}(\Delta),f_{1}(\Delta),...,f_{d}(\Delta), 0, 0, ...,0)$.\\

\indent Suppose that $I=\langle u_{1},u_{2},...,u_{r}\rangle$ is a squarefree monomial ideal of a polynomial ring $R=F[x_1,x_2, \ldots,x_n]$ with $supp(I)=\{x_1,x_2, \ldots,x_n\}$. We use $G(I)$ to denote the unique set of minimal generators of $I$. We can associate to $I$ two simplicial complexes. The facet complex is a simplicial complex denoted by $\delta _{\mathcal{F}}(I)$ and is defined as $\delta _{\mathcal{F}}(I)=\langle F_{1},F_{2},..., F_{r}\rangle$ where $F_{i}=\{v_{j}:x_{j}$ divide
$u_{i}\}$ is the facet on the vertices $v_1,v_2,\ldots,v_n$, and $i=1,2,3,...,r$.
The non-face complex (or Stanley-Reisner complex) is a simplicial complex on $V=\{v_1,v_2,\ldots v_n\}$ such that a subset $\{v_{i_{1}},v_{i_{2}},...,v_{i_{k}}\}$ of $V$ is a face of this non-face complex if and only if the corresponding monomial
$x_{i_{1}}x_{i_{2}}\cdots x_{i_{k}}$ does not belong to $I$. We denote it by $\delta_{\mathcal{N}}(I)$. In this paper, we are interested in the following family of squarefree monomial ideals:
\begin{Definition}
\emph{A squarefree monomial ideal $I$ in the polynomial ring $R$ with the field $F$ is said to be a quasi $f$-ideal
of type $(a_1,a_2,\ldots,a_r)\in \mathbb{Z}^r $ if and only if $f_{i}(\delta _{\mathcal{N}}(I))- f_{i}(\delta _{\mathcal{F}}(I))=a_i$ for all $i\in \{1,2,3,...,r\}.$}
\end{Definition}
\begin{Remark}\emph{It is noted that, if $a_{i}=0$ for all $i\in \{1,2,3,...,r\}$, then obviously $I$ is an $f$-ideal. This means that every $f$-ideal is a quasi $f$-ideal whose type is a zero vector, and obviously any quasi $f$-ideal with type some non-zero vector can not be an $f$-ideal. It is impotent to mention that the class of quasi $f$-ideals is much more bigger than the class of $f$-ideals; moreover, unlike $f$-ideals, examples of quasi $f$-ideals can be found in $R=F[x_{1},x_{2,}...,x_{n}]$, for any $n$.}
\end{Remark}

We would like to recall the [12, Definition 2.1] of perfect sets of $R$. We use $sm(R)$ and $sm(R)_d$ to denote the set of all squarefree monomials and the set of all squarefree monomials of degree $d$ in $R$ respectively;
\begin{Definition}\emph{Let $R=F[x_{1},x_{2,}...,x_{n}]$, and let $A\subseteq sm(R)$. We define \\
(i) $\sqcup(A)=\{gx_{i} \ | \ g\in A, {x_{i}} \text{  does not divide  } g, 1\leq
i\leq n\}$ and \\ (ii) $\sqcap(A)=\{h \ | \ h=g/x_{i} \text{  for some  } g\in A  \text{  and some  } x_{i} \text{
with  } x_{i}|g\}$\\
$A$ is called lower perfect if $\sqcap(A)= sm(R)_{d-1}$ holds. Dually, $A$ is called upper perfect if $\sqcup(A)= sm(R)_{d+1}$ holds. If $A$ is both lower perfect and upper perfect, then $A$ is called a perfect set.}
\end{Definition}

\section{Characterization of Quasi $f$-Ideals}
The purpose of this section is to characterize quasi $f$-ideals of degree $d\geq2$ in the polynomial ring $R=F[x_{1},x_{2,}...,x_{n}]$.

%
%\begin{Lemma}
%\emph{Let $I$ be a pure squarefree monomial ideal of $R$ of degree $d+1$ and let $G(I)=\{u_1,u_2,\ldots,u_r\}$ be minimal set of generator of $I$. Then $$f_{d}(\delta _{\mathcal{F}}(I))+f_{d}(\delta_{\mathcal{N}}(I))={n \choose {d+1}}$$}
%\end{Lemma}
The following lemma is key to understanding how to count the number of faces for particular dimensions of non-face complex of a pure squarefree monomial ideal for any degree.
\begin{Lemma}
\emph{Let $I$ be a pure squarefree monomial ideal of degree $d+1$ in
$R=F[x_{1},x_{2},...,x_{n}]$. Then $f_{i}(\delta _{\mathcal{N}}(I))$
$=$ $n \choose {i+1}$, where $0\leq i<{d}.$}

\end{Lemma}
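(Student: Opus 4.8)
The plan is to translate the combinatorial count directly into a statement about squarefree monomials and then exploit the fact that $I$ is generated in the single degree $d+1$. By the definition of the non-face (Stanley--Reisner) complex, a subset $\{v_{j_1},\dots,v_{j_{i+1}}\}$ of the vertex set is a face of $\delta_{\mathcal N}(I)$ of dimension $i$ precisely when the associated squarefree monomial $x_{j_1}\cdots x_{j_{i+1}}$ does \emph{not} lie in $I$. Hence $f_i(\delta_{\mathcal N}(I))$ equals the number of squarefree monomials of degree $i+1$ that are not contained in $I$, and the whole lemma reduces to counting these.

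The key observation is that, since $I$ is pure of degree $d+1$, every element of its minimal generating set $G(I)$ has degree $d+1$, and a monomial belongs to $I$ if and only if it is divisible by one of these generators. Consequently any monomial in $I$ has degree at least $d+1$. In particular, when $i<d$ we have $i+1\le d<d+1$, so no squarefree monomial of degree $i+1$ can be divisible by a degree-$(d+1)$ generator; that is, every squarefree monomial of degree $i+1$ fails to lie in $I$ and therefore contributes a face of dimension $i$.

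It then remains only to count the squarefree monomials of degree $i+1$ in $n$ variables. Such a monomial is determined by choosing which $i+1$ of the $n$ variables appear (each with exponent one), giving exactly $\binom{n}{i+1}$ monomials. Combining this with the first paragraph yields $f_i(\delta_{\mathcal N}(I))=\binom{n}{i+1}$ for all $0\le i<d$. I do not expect a serious obstacle here: the entire argument rests on the single degree bound forcing all low-degree squarefree monomials out of $I$, and the only point needing care is the bookkeeping of the inequality $i<d$ (equivalently $i+1\le d$), which is exactly what guarantees the generators are too large to divide the monomials being counted.
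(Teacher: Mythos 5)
Your proof is correct, and it takes a more direct route than the paper. The paper's own argument splits the $\binom{n}{i+1}$ subsets of size $i+1$ into faces and non-faces of the facet complex $\delta_{\mathcal{F}}(I)$: it sets $n_i$ to be the number of $i$-dimensional non-faces of $\delta_{\mathcal{F}}(I)$, observes that these non-faces yield faces of $\delta_{\mathcal{N}}(I)$, and then cites an external result ([3, Lemma 3.6]) to conclude that the faces of $\delta_{\mathcal{F}}(I)$ of dimension less than $d$ are also faces of $\delta_{\mathcal{N}}(I)$, so the two counts recombine to $\binom{n}{i+1}-n_i+n_i=\binom{n}{i+1}$. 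You bypass the facet complex entirely: since $I$ is equigenerated in degree $d+1$, no squarefree monomial of degree $i+1\le d$ can be divisible by a generator, so every one of the $\binom{n}{i+1}$ such monomials lies outside $I$ and hence gives a face of $\delta_{\mathcal{N}}(I)$. Your version is self-contained, makes the essential mechanism (the degree bound) explicit, and avoids the citation; the paper's version is organized to set up the bookkeeping with $n_i$ that it reuses in the proof of Theorem 3.2, which is the only thing its detour buys.
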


\begin{proof}
If $n_i$ is the number of $i$-dimensional non-faces of facet
complex of $I$ where $0\leq i<{d}$, then it means that $f_{i}(\delta _{\mathcal{F}}(I))$ $=$ $n \choose {i+1}$ $-$
$n_i$ and $\delta_{\mathcal{N}}(I)$ contains at least $n_i$ number of $i$-dimensional faces. The [3, Lemma 3.6] tells us that the face of dimension less than $d$ of $\delta_{\mathcal{F}}(I)$ is the face of $\delta_{\mathcal{N}}(I)$. Therefore, we have ${f_{i}(\delta _{\mathcal{N}}(I))}={n \choose {i+1}}-{n_i}+{n_i}={n \choose {i+1}}$

\end{proof}

\begin{Theorem}(Characterization)\\
\emph{Let $I$ be an equigenerated squarefree
monomial ideal of $R$ of degree $d$ and let $G(I)=\{u_1,u_2,\ldots,u_r\}$ be minimal set of generator of $I$. Then
$I$ is quasi $f$-ideal of type $(a_{0}, a_{1}, a_{2},..., a_{d-1})\in \mathbb{Z}^d$ if and only if the following conditions
hold true:
\begin{enumerate}
    \item $ht(I)=n-d$;
    \item ${n\choose d}\equiv  \begin{cases}
                           0\  (mod\  2) & \text{if $a_{d-1}$ is even} \\
                           1\  (mod\  2) & \text{if $a_{d-1}$ is odd}
                         \end{cases}
,$ and $r=|G(I)|=\frac{1}{2}({n\choose d}-a_{d-1})$. ;
    \item $a_{i}$ is the number of $i$-dimensional non-faces of $\delta _{\mathcal{F}}(I)$, where $i=0, 1, 2,..., d-2$.
\end{enumerate}}
\end{Theorem}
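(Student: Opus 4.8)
The plan is to prove both implications simultaneously by computing, range by range in the index $i$, the two quantities $f_i(\delta_{\mathcal{N}}(I))$ and $f_i(\delta_{\mathcal{F}}(I))$ and reading off exactly what the requirement $f_i(\delta_{\mathcal{N}}(I))-f_i(\delta_{\mathcal{F}}(I))=a_i$ imposes. Throughout, since $I$ is equigenerated of degree $d$ and $G(I)=\{u_1,\dots,u_r\}$ is its minimal generating set, the facets of $\delta_{\mathcal{F}}(I)$ are precisely the $d$-element sets $F_j=\{v_k:x_k\mid u_j\}$; minimality forces $F_j\not\subseteq F_\ell$ for $j\ne\ell$, so these are $r$ distinct maximal faces, all of dimension $d-1$. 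Consequently $dim(\delta_{\mathcal{F}}(I))=d-1$ and $f_{d-1}(\delta_{\mathcal{F}}(I))=r$, two facts I would record once at the outset.

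First I would dispose of condition (1), which encodes the equal-dimension requirement built into the definition of a quasi $f$-ideal. Since $\delta_{\mathcal{N}}(I)$ is the Stanley--Reisner complex of $I$, the standard identities $dim(R/I)=dim(\delta_{\mathcal{N}}(I))+1$ and $dim(R/I)=n-ht(I)$ give $dim(\delta_{\mathcal{N}}(I))=n-ht(I)-1$. Comparing with $dim(\delta_{\mathcal{F}}(I))=d-1$, the equality $dim(\delta_{\mathcal{N}}(I))=dim(\delta_{\mathcal{F}}(I))$ holds if and only if $ht(I)=n-d$, which is exactly condition (1). This equivalence also guarantees that $\delta_{\mathcal{N}}(I)$ has no faces of dimension exceeding $d-1$, so that every face of each complex is counted in the two ranges below.

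Next I would handle the low range $0\le i\le d-2$. Applying Lemma 3.1 with $d$ in place of $d+1$ yields $f_i(\delta_{\mathcal{N}}(I))=\binom{n}{i+1}$ for these $i$, the point being that no squarefree monomial of degree $<d$ can lie in the degree-$d$-generated ideal $I$. Writing $n_i:=\binom{n}{i+1}-f_i(\delta_{\mathcal{F}}(I))$ for the number of $i$-dimensional non-faces of $\delta_{\mathcal{F}}(I)$, the difference becomes $f_i(\delta_{\mathcal{N}}(I))-f_i(\delta_{\mathcal{F}}(I))=n_i$, so demanding this equal $a_i$ for each $i\in\{0,\dots,d-2\}$ is precisely condition (3). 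For the top index $i=d-1$, the $(d-1)$-faces of $\delta_{\mathcal{N}}(I)$ are the degree-$d$ squarefree monomials \emph{not} in $I$; as such a monomial lies in $I$ iff it equals one of the $r$ generators, we get $f_{d-1}(\delta_{\mathcal{N}}(I))=\binom{n}{d}-r$, whence $f_{d-1}(\delta_{\mathcal{N}}(I))-f_{d-1}(\delta_{\mathcal{F}}(I))=\binom{n}{d}-2r$. Setting this equal to $a_{d-1}$ is equivalent to $r=\frac{1}{2}\bigl(\binom{n}{d}-a_{d-1}\bigr)$, and since $r$ is a positive integer this in turn forces $\binom{n}{d}-a_{d-1}$ to be even, i.e. the stated parity of $\binom{n}{d}$ against $a_{d-1}$; together these give condition (2). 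Assembling the three ranges, the componentwise equalities for all $i$ are equivalent to the conjunction of (1)--(3).

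The main obstacle is not arithmetic but conceptual: the definition of a quasi $f$-ideal silently presupposes $dim(\delta_{\mathcal{F}}(I))=dim(\delta_{\mathcal{N}}(I))$, and without condition (1) one cannot even guarantee that $\delta_{\mathcal{N}}(I)$ has no faces above dimension $d-1$, which would corrupt the top-degree count $f_{d-1}(\delta_{\mathcal{N}}(I))=\binom{n}{d}-r$. Thus the genuinely load-bearing input is the translation $dim(\delta_{\mathcal{N}}(I))=n-ht(I)-1$ via the Krull dimension of the Stanley--Reisner ring, and I would take care to present the parity clause of (2) as a \emph{consequence} of the integrality of $r$ rather than as an independent hypothesis.
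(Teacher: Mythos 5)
Your proposal is correct and follows essentially the same route as the paper's proof: the dimension formula $\dim(\delta_{\mathcal{N}}(I))=n-\operatorname{ht}(I)-1$ for condition (1), Lemma 3.1 for the range $0\le i\le d-2$, and the top-degree identity $f_{d-1}(\delta_{\mathcal{N}}(I))=\binom{n}{d}-f_{d-1}(\delta_{\mathcal{F}}(I))$ yielding $2r=\binom{n}{d}-a_{d-1}$ for condition (2). The only differences are organizational (a single chain of componentwise equivalences rather than two separate implications) and that you derive the top-degree identity directly instead of citing it, which if anything makes the argument more self-contained.
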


\begin{proof}
If $I$ is a quasi $f$-ideal of degree $d$ with type $(a_{0}, a_{1}, a_{2},...,
a_{d-1})$, then obviously, dimensions of both facet complex and non-face complex of $I$ are same. By [1, Lemma 3.4] the height of ideal $I$ must be equal to $n-d$. As $I$ is a squarefree monomial ideal of degree $d$, using the [1, Lemma 3.2], we have that
\begin{equation}
    \label{eq-1}  f_{d-1}(\delta _{\mathcal{F}}(I))={n \choose d}-f_{d-1}(\delta_{\mathcal{N}}(I))\end{equation}
It is noted that if $I$ is a quasi $f$-ideal of type $(a_{0}, a_{1}, a_{2},..., a_{d-1})$, then
\begin{equation}
    \label{eq-1}f_{d-1}(\delta _{\mathcal{N}}(I))= f_{d-1}(\delta
_{\mathcal{F}}(I))+a_{d-1}\end{equation} From equation (1) and equation (2), we have that $2f_{d-1}(\delta _{\mathcal{F}}(I))={n \choose d}-a_{d-1}$. It means that the parity of ${n\choose d}$ is same as the parity of $a_{d-1}$, and $|G(I)|=f_{d-1}(\delta _{\mathcal{F}}(I))=\frac{1}{2}({n\choose d}-a_{d-1})$. Now we want to show that $a_{i}$ is the number of $i$-dimensional non-faces of $\delta _{\mathcal{F}}(I)$, where $i=0, 1, 2,..., d-2$.
Note that for any squarefree monomial ideal $I$ of degree $d$ the Lemma 3.1, tells us that the number of $i$-dimensional faces of $\delta_{\mathcal{N}}(I)$ will be equal to $n \choose {i+1}$ for all $0\leq i <d-1$. As $I$ is a quasi $f$-ideal of degree $d$ with type $(a_{0}, a_{1}, a_{2},..., a_{d-1})$. Therefore, $f_{i}(\delta_{\mathcal{N}}(I))- f_{i}(\delta _{\mathcal{F}}(I))=a_{i}$ which implies ${n \choose {i+1}}-f_{i}(\delta _{\mathcal{F}}(I))=a_{i}$ for all
$0\leq i <d-1$.\\

\indent Conversely, suppose that conditions $(1)$, $(2)$ and $(3)$ hold true. For any squarefree monomial ideal $I$, [15, Proposition
5.3.10] gives $dim (\delta _{\mathcal{N}}(I))=n-ht(I)-1$ and since height of $I$ is $n-d$, so we have $dim(\delta_{\mathcal{N}}(I))=dim(\delta _{\mathcal{F}}(I))$. Now, we will show that $f(\delta_{\mathcal{N}}(I))-f(\delta_{\mathcal{N}}(I))=(a_{0}, a_{1}, a_{2},..., a_{d-1}).$ Since $I$ is a squarefree monomial ideal of degree $d$, $f_{d-1}(\delta_{\mathcal{N}}(I))={n \choose d}-f_{d-1}(\delta _{\mathcal{F}}(I))$. This implies \begin{equation}
    \label{eq-1}f_{d-1}(\delta_{\mathcal{N}}(I))-f_{d-1}(\delta_{\mathcal{F}}(I))={n \choose d}-2f_{d-1}(\delta _{\mathcal{F}}(I))\end{equation} As $f_{d-1}(\delta _{\mathcal{F}}(I))=\frac{1}{2}({n\choose d}-a_{d-1})$, this implies ${n\choose d}-{2f_{d-1}(\delta _{\mathcal{F}}(I))}={a_{d-1}}$. Therefore, equation (3) becomes  $f_{d-1}(\delta _{\mathcal{N}}(I))-f_{d-1}(\delta _{\mathcal{F}}(I))= a_{d-1}$.
Since $a_{i}$ is the number of $i$-dimensional non-faces of $\delta _{\mathcal{F}}(I)$ where $i=0, 1, 2,..., d-2$, this means that ${n\choose {i+1}}-f_{i}(\delta _{\mathcal{F}}(I))=a_{i}$. From Lemma $3.1$, the number of $i$-dimensional faces of  $\delta _{\mathcal{N}}(I))$ are ${n\choose {i+1}}$ therefore, we have $f_{i}(\delta _{\mathcal{N}}(I))-f_{i}(\delta _{\mathcal{F}}(I))=a_{i}$ for all $i\in \{0, 1, 2,..., d-2\}$. Thus $I$ is quasi $f$-ideal of type $(a_{0}, a_{1}, a_{2},..., a_{d-1})\in \mathbb{Z}^d$.
\end{proof}

It is worth noting that [3, Theorem 3.9] and [14, Theorem 4.1] can be deduce by setting $a_{i}=0$ for all $i\in \{0,1,...,d-1\}$ in Theorem 3.2 such as the following corollary:

\begin{Corollary}
\emph{Let $I=(u_1,u_2,\ldots,u_r)$ be an equigenerated squarefree
monomial ideal of $R$ of degree $d.$ Then $I$ is quasi $f$-ideal of type $\mathbf{0}=(0,0,...,0)\in \mathbb{Z}^d$ if and only if the following conditions
hold true:
\begin{enumerate}
    \item $ht(I)=n-d$;
    \item ${n\choose d}\equiv 0\  (mod\  2),$ and $r=|G(I)|=\frac{1}{2}{n\choose d}$;
    \item $f_{d-2}(\delta _{\mathcal{F}}(I))={n\choose {d-1}}$ \
\end{enumerate}}

\end{Corollary}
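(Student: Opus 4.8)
The plan is to derive this corollary directly from the Characterization Theorem (Theorem 3.2) by specializing the type to the zero vector $\mathbf{0}=(0,\dots,0)\in\mathbb{Z}^d$ and then simplifying each of the three resulting conditions. Since Theorem 3.2 is an equivalence, $I$ is a quasi $f$-ideal of type $\mathbf{0}$ (equivalently, an $f$-ideal, by the Remark) if and only if its conditions (1)--(3) hold with every $a_i=0$, so the whole task is to check that those specialize precisely to the three conditions stated here. No independent combinatorial machinery is needed; everything rides on Theorem 3.2 plus one downward-closure observation.

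Working through the conditions, I would note first that condition (1) is literally unchanged, so there is nothing to do. For condition (2), putting $a_{d-1}=0$ makes the parity alternative collapse: $a_{d-1}$ is even, so Theorem 3.2(2) becomes $\binom{n}{d}\equiv 0\ (\mathrm{mod}\ 2)$ together with $r=|G(I)|=\tfrac12(\binom{n}{d}-0)=\tfrac12\binom{n}{d}$, exactly as asserted. The only condition requiring an actual argument is (3): Theorem 3.2(3) with $a_i=0$ says that $\delta_{\mathcal{F}}(I)$ has no $i$-dimensional non-faces for each $i=0,1,\dots,d-2$, i.e.\ $f_i(\delta_{\mathcal{F}}(I))=\binom{n}{i+1}$ for every such $i$. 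I would then show that this whole family of equalities is equivalent to the single equation $f_{d-2}(\delta_{\mathcal{F}}(I))=\binom{n}{d-1}$ appearing in the corollary.

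The key to that reduction, and the one genuine obstacle in the proof, is to exploit that $\delta_{\mathcal{F}}(I)$ is a simplicial complex and hence closed under passing to subsets. The equation $f_{d-2}(\delta_{\mathcal{F}}(I))=\binom{n}{d-1}$ says that all $\binom{n}{d-1}$ subsets of $V$ of size $d-1$ are faces, i.e.\ the full $(d-2)$-skeleton is present. Any $i$-face with $i\le d-2$ is a subset of some $(d-1)$-element subset, which is a face by hypothesis, so by downward closure every $i$-subset is itself a face and $f_i(\delta_{\mathcal{F}}(I))=\binom{n}{i+1}$ for all $i\le d-2$; the reverse implication is trivial. This is exactly where the downward-closed structure of $\delta_{\mathcal{F}}(I)$ is indispensable. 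I would also record that the index range $i=0,\dots,d-2$ is non-empty precisely because the standing hypothesis is $d\ge 2$, so the reduction behaves correctly at the boundary case $d=2$, where condition (3) reads simply $f_0(\delta_{\mathcal{F}}(I))=\binom{n}{1}=n$.
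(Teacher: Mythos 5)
Your proposal is correct and follows the same overall route as the paper: specialize Theorem 3.2 to the zero type and simplify. Conditions (1) and (2) are handled identically (the paper's own proof even contains a small typo, writing $a_{d-2}=0$ where $a_{d-1}=0$ is meant for the formula $r=\tfrac12\binom{n}{d}$). Where you genuinely add something is condition (3): the corollary replaces the full family of equations $f_i(\delta_{\mathcal{F}}(I))=\binom{n}{i+1}$ for $i=0,\dots,d-2$ demanded by Theorem 3.2(3) with the single equation at $i=d-2$, and for the ``if'' direction of the corollary one must explain why that single equation recovers the whole family. Your downward-closure argument --- all $(d-1)$-subsets of the vertex set are faces, every smaller subset sits inside one of them, hence every smaller subset is a face --- is exactly the missing justification. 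The paper's proof only records the forward implication ($I$ of type $\mathbf{0}$ implies $f_{d-2}(\delta_{\mathcal{F}}(I))=\binom{n}{d-1}$, via Lemma 3.1 and the fact that $a_{d-2}=0$) and leaves the converse reduction implicit, so your version is the more complete of the two.
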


\begin{proof}
It is clear from above Theorem. Note that if $a_{d-2}=0$, then $r=|G(I)|=\frac{1}{2}{n\choose d}$. From Lemma 3.1 and [1, Lemma 3.2], together implies $f_{d-2}(\delta _{\mathcal{F}}(I))={n\choose {d-1}}.$
\end{proof}

\begin{Corollary}
\emph{Let $I$ be an equigenerated squarefree
monomial ideal of $R$ of degree $2$ and let $G(I)=\{u_1,u_2,\ldots,u_r\}$ be minimal set of generator of $I$. Then
$I$ is quasi $f$-ideal of type $(a_{0}, a_{1})\in \mathbb{Z}^2$ if and only if the following conditions
hold true:
\begin{enumerate}
    \item $ht(I)=n-2$;
    \item ${n\choose 2}\equiv  \begin{cases}
                           0\  (mod\  2) & \text{if $a_{1}$ is even} \\
                           1\  (mod\  2) & \text{if $a_{1}$ is odd}
                         \end{cases}
,$ and $r=|G(I)|=\frac{1}{2}({n\choose 2}-a_{1})$. ;
    \item $a_{0}$ is the number of $i$-dimensional non-faces of $\delta _{\mathcal{F}}(I)$.
\end{enumerate}}

\end{Corollary}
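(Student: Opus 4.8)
The plan is to obtain this corollary as the special case $d=2$ of the Characterization Theorem (Theorem 3.2), so that essentially no new argument is needed beyond substitution. Since $I$ is equigenerated of degree $2$, Theorem 3.2 applies directly with $d=2$: the type vector then lives in $\mathbb{Z}^2$ and is written $(a_0,a_1)$, where in the notation of Theorem 3.2 we have $a_1=a_{d-1}$.

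First I would substitute $d=2$ into each of the three conditions of Theorem 3.2 and check that they reproduce items (1)--(3) of the corollary. Condition (1), namely $ht(I)=n-d$, becomes $ht(I)=n-2$ verbatim. Condition (2) compares the parity of ${n\choose d}$ with that of $a_{d-1}$ and asserts the count $r=\frac{1}{2}\bigl({n\choose d}-a_{d-1}\bigr)$; with $d=2$ and $a_{d-1}=a_1$ this is exactly the stated parity condition on ${n\choose 2}$ together with $r=|G(I)|=\frac{1}{2}\bigl({n\choose 2}-a_1\bigr)$.

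The only point deserving attention is condition (3). In Theorem 3.2 it reads: $a_i$ is the number of $i$-dimensional non-faces of $\delta_{\mathcal{F}}(I)$ for $i=0,1,\ldots,d-2$. When $d=2$ the index range $0\le i\le d-2$ collapses to the single value $i=0$, so the condition reduces to the single requirement that $a_0$ equal the number of $0$-dimensional non-faces of $\delta_{\mathcal{F}}(I)$. This is precisely item (3) of the corollary, where the phrase ``$i$-dimensional'' should be read as ``$0$-dimensional''; I would make this reading explicit to avoid the ambiguous stray index.

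Consequently there is no genuine obstacle here: the corollary is an immediate specialization, and the proof amounts to recording the three substitutions above and citing Theorem 3.2. I would close the argument with a one-line invocation of Theorem 3.2 at $d=2$. If one preferred a self-contained derivation, the same two defining equations used in Theorem 3.2 --- namely $f_{1}(\delta_{\mathcal{F}}(I))={n\choose 2}-f_{1}(\delta_{\mathcal{N}}(I))$ from [1, Lemma 3.2] and $f_{0}(\delta_{\mathcal{N}}(I))={n\choose 1}$ from Lemma 3.1 --- specialize verbatim, but invoking Theorem 3.2 is the cleaner route.
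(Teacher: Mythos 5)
Your proposal is correct and matches the paper's (implicit) treatment exactly: the corollary is stated as the immediate specialization $d=2$ of Theorem 3.2, and your substitution of the three conditions is precisely what is intended. Your observation that the stray ``$i$-dimensional'' in item (3) should read ``$0$-dimensional'' is a fair catch of a typographical slip rather than a mathematical issue.
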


\begin{Example}
\emph{Let
$I=(x_{1}x_{2}x_{4},x_{1}x_{2}x_{5},x_{3}x_{4}x_{5},x_{1}x_{4}x_{5},x_{2}x_{3}x_{5})$
be a pure squarefree monomial ideal of degree $3$ in the polynomial
ring $F[x_{1},x_{2},x_{3},x_{4},x_{5}].$ Then the primary
decomposition of $I$ is $(x_{1},x_{3})\bigcap (x_{1},x_{5})\bigcap
(x_{2},x_{4})\bigcap (x_{2},x_{5})\bigcap (x_{4},x_{5}).$ The facet complex
and the non-face complex of $I$ are $$\delta _{\mathcal{F}}(I)=\langle
\{1,2,4\},\{1,2,5\},\{3,4,5\},\{1,4,5\},\{2,3,5\}\rangle$$ and
$$\delta _{\mathcal{N}}(I)=\langle
\{1,2,3\},\{1,3,4\},\{1,3,5\},\{2,3,4\},\{2,4,5\}\rangle.$$ As
$\{1,3\}$ is non-face of $\delta _{\mathcal{F}}(I)$,
$ht(I)=2$, ${5\choose 3}\equiv 0\ (mod\ 2)$ and
$|G(I)|=5=\frac{1}{2}({5\choose 3}-0).$ Thus $I$ is a quasi
$f$-ideal with type $(0,0,1,0).$}
\end{Example}

\begin{Example}
\emph{Let
$I=(x_{1}x_{2}x_{4},x_{1}x_{2}x_{5},x_{3}x_{4}x_{5},x_{1}x_{4}x_{5})$
be a pure squarefree monomial ideal of degree $3$ in the polynomial
ring $F[x_{1},x_{2},x_{3},x_{4},x_{5}].$ Then the primary
decomposition of $I$ is $(x_{1},x_{3})\bigcap (x_{1},x_{4})\bigcap
(x_{1},x_{5})\bigcap (x_{2},x_{4})\bigcap (x_{2},x_{5})\bigcap
(x_{4},x_{5}).$ The facet and non-face complexes of $I$ are $$\delta
_{\mathcal{F}}(I)=\langle
\{1,2,4\},\{1,2,5\},\{3,4,5\},\{1,4,5\}\rangle$$ and
$$\delta _{\mathcal{N}}(I)=\langle
\{1,2,3\},\{1,3,4\},\{1,3,5\},\{2,3,4\},\{2,3,5\},\{2,4,5\}\rangle.$$
Then $f(\delta _{\mathcal{F}}(I))=(1,5,8,4)$ and $f(\delta
_{\mathcal{N}}(I))=(1,5,10,6).$ Thus $I$ is a quasi $f$-ideal with
type $(0,0,2,2).$}
\end{Example}

In the following theorem, we give the formulae to compute the Hilbert function and Hilbert series of polynomial ring $R$ modulo a squarefree monomial quasi $f$-ideal I.

\begin{Theorem}
\emph{Let $I$ be a quasi $f$-ideal in a polynomial ring $R$
of type $(a_{-1}, a_{0}, a_{1},\\...,a_{d}).$ Then \\ (1) The Hilbert series of $R/I$ is given by
$$F(R\slash I,k)=\sum_{i=-1}^{d}{\frac{f_{i}(\delta _{\mathcal{F}}(I))+a_{i}}{{(1-k)}^{i+1}}{k^{i+1}}}$$\\ (2) The Hilbert function of $R/I$ is given by
$$H(R\slash I,k)=\sum_{i=0}^{d}{{{k-1}\choose i}(f_{i}(\delta _{\mathcal{F}}(I))+a_{i})}$$ where $k\geq 1$ and $H(R\slash I,0)=1$.}
\end{Theorem}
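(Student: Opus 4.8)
The plan is to reduce the statement to the classical description of the Hilbert series and Hilbert function of a Stanley--Reisner ring, and then to insert the defining relation of a quasi $f$-ideal. The starting point is that, by the very definition of the non-face complex, $R/I$ \emph{is} the Stanley--Reisner ring of $\delta_{\mathcal{N}}(I)$: a monomial $x^{a}$ of $R$ survives in $R/I$ precisely when its support $\{x_{j}:a_{j}>0\}$ is a face of $\delta_{\mathcal{N}}(I)$, and the classes of these monomials form an $F$-basis of $R/I$. Consequently both Hilbert invariants of $R/I$ are governed entirely by the $f$-vector of $\delta_{\mathcal{N}}(I)$, and since $I$ is a quasi $f$-ideal we have $\dim\delta_{\mathcal{N}}(I)=\dim\delta_{\mathcal{F}}(I)=d$, so the index ranges below are well defined.

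First I would record the standard face-counting identities (cf. [15]). Fix a nonempty face $F$ of $\delta_{\mathcal{N}}(I)$ with $\dim F=i$, so $|F|=i+1$. The monomials of total degree $k$ whose support is exactly $F$ correspond bijectively to the compositions of $k$ into $i+1$ positive parts, and there are $\binom{k-1}{i}$ of them; their generating function in the (formal) variable $k$ is $\left(\frac{k}{1-k}\right)^{i+1}=\frac{k^{i+1}}{(1-k)^{i+1}}$, following the notation of the statement. Summing over all faces grouped by dimension, and treating the empty face (dimension $-1$, contributing the constant $1$, i.e.\ the class of $1\in R/I$) separately, gives
$$F(R/I,k)=\sum_{i=-1}^{d}\frac{f_{i}(\delta_{\mathcal{N}}(I))}{(1-k)^{i+1}}k^{i+1},\qquad H(R/I,k)=\sum_{i=0}^{d}\binom{k-1}{i}f_{i}(\delta_{\mathcal{N}}(I))\quad(k\ge 1),$$
with $H(R/I,0)=1$ coming from the empty face alone.

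The final step is purely formal. Since $I$ is a quasi $f$-ideal of type $(a_{-1},a_{0},a_{1},\ldots,a_{d})$, its defining property (Definition 2.1) yields $f_{i}(\delta_{\mathcal{N}}(I))=f_{i}(\delta_{\mathcal{F}}(I))+a_{i}$ for every $i$; substituting this into both displays immediately produces the two asserted formulas. I expect no substantive obstacle, since all the analytic content is carried by the standard Stanley--Reisner formula and the quasi $f$-ideal hypothesis enters only through this single linear substitution. The only points that require care are bookkeeping ones: matching the two index ranges ($-1\le i\le d$ for the series versus $0\le i\le d$ for the function), confirming the equality of the two dimensions so that the upper limit $d$ is legitimate, and isolating the value $H(R/I,0)=1$, which is not covered by the $\binom{k-1}{i}$ count because $\binom{k-1}{i}$ vanishes at the empty face for $k\ge 1$.
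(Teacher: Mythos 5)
Your proposal is correct and follows essentially the same route as the paper: identify $R/I$ with the Stanley--Reisner ring of $\delta_{\mathcal{N}}(I)$, invoke the standard formulas for its Hilbert series and Hilbert function in terms of the $f$-vector of $\delta_{\mathcal{N}}(I)$ (the paper cites [15, Theorems 6.7.2 and 6.7.3] where you re-derive them by the face-counting argument), and then substitute $f_{i}(\delta_{\mathcal{N}}(I))=f_{i}(\delta_{\mathcal{F}}(I))+a_{i}$ from the quasi $f$-ideal hypothesis. The extra care you take with the index ranges and the value at $k=0$ is sound but does not change the argument.
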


\begin{proof}
As $I$ is a quasi $f$-ideal in a polynomial $R$ of type $(a_{-1}, a_{0}, a_{1},..., a_{d})$. Therefore, $f_{i}(\delta_{\mathcal{N}}(I))= f_{i}(\delta_{\mathcal{F}}(I))+a_{i}$, this means that the $f$-vector of $f(\delta _{\mathcal{N}}(I))$ is $(f_{-1}(\delta _{\mathcal{F}}(I))+a_{-1},f_{0}(\delta _{\mathcal{F}}(I))+a_{0},...,f_{d}(\delta _{\mathcal{F}}(I))+a_{d}).$ Since $I$ is  non face ideal of the simplicial complex $\delta _{\mathcal{N}}(I))$. Now using [15, Theorem 6.7.2] and [15, Theorem 6.7.3], we have that $$F(R\slash I,k)=\sum_{i=-1}^{d}{\frac{f_{i}(\delta _{\mathcal{F}}(I))+a_{i}}{{(1-k)}^{i+1}}{k^{i+1}}}$$ and $$H(R\slash I,k)=\sum_{i=0}^{d}{{{k-1}\choose i}(f_{i}(\delta _{\mathcal{F}}(I))+a_{i})}$$ as desired.
\end{proof}

\begin{Example}
\emph{We return to the ideal $I$ in Example 3.6. This ideal $I$ is a quasi $f$-ideal with type $(0,0,2,2)$ (i.e. $a_{-1}=0, a_{0}=0,a_{1}=2$ and $a_{2}=2$) and the facet complex of $I$ is $$\delta_{\mathcal{F}}(I)=\langle \{1,2,4\},\{1,2,5\},\{3,4,5\},\{1,4,5\}\rangle$$ clearly, $f(\delta _{\mathcal{F}}(I))=(1,5,8,4)$ (i.e. $f_{-1}=1, f_{0}=5,f_{1}=8$ and $f_{2}=4$. Thus the Hilbert function and Hilbert series of $R[x_{1},x_{2},x_{3},x_{4},x_{5}]/I$ are $$H(R/I,k)={{k-1}\choose 0}(5+0)+{{k-1}\choose 1}(8+2)+{{k-1}\choose 2}(4+2)$$ and $$F(R/I,k)={\frac{1+0}{{(1-k)}^{-1+1}}{k^{-1+1}}}+{\frac{5+0}{{(1-k)}^{0+1}}{k^{0+1}}}+{\frac{8+2}{{(1-k)}^{1+1}}{k^{1+1}}}+{\frac{4+2}{{(1-k)}^{2+1}}{k^{2+1}}}$$ respectively, where $R=F[x_{1},x_{2},x_{3},x_{4},x_{5}].$}
\end{Example}

\section{Newton complementary dual of Quasi $f$-Ideal}

 In [4], Budd and Van Tuyl examined that the property of being $f$-ideals remain the same after taking the Newton complementary dual of a squarefree monomial ideal $I$. As quasi $f$-ideal is the generalization of  $f$-ideal. Therefore, it is natural to ask the following question: \\ When is Newton's complementary dual of $I$ a quasi $f$-ideal?\\ In this section, we have tended to this inquiry. Firstly, we recall the [4, Definition 3.1] of Newton complementary dual of a squarefree monomial ideal.
\begin{Definition}
\emph{Let $I$ be a monomial ideal of $R$ with $G(I)=\{u_1,u_2,\ldots,u_r\}$ is a minimal set of generators of $I$. The Newton complementary dual of $I$ is simply denoted by $\widehat{I}$ and is defined as $$\widehat{I}=\langle \frac {\prod\limits_{i=1}^{n}{x_{i}}}{u}|u\in G(I)\rangle$$}
\end{Definition}

\begin{Example}
\emph{The Newton complementary dual of an ideal $I$ in Example 3.5 is $\widehat I=(x_{3}x_{5},x_{3}x_{4},x_{1}x_{2},x_{2}x_{3},x_{1}x_{4}).$}
\end{Example}
It is easy to understand that $\widehat I=(x_{3}x_{5},x_{3}x_{4},x_{1}x_{2},x_{2}x_{3},x_{1}x_{4})$ is not a quasi $f$-ideal. Because the minimal generating set $G(\widehat I)$ of $\widehat I$ is not upper perfect since $x_{2}x_{4}x_{5}\notin {\sqcup G(\widehat I)} $. But keeping in mind that $I$ is a quasi $f$-ideal. It means that the Newton complementary dual of a quasi $f$-ideal need not be quasi $f$-ideal. The following remark helps us to overcome the problem of duality of a squarefree monomial quasi $f$-ideal.

\begin{Remark}
\emph{For a squarefree monomial ideal $I$ of degree $d$ in a polynomial ring $R=F[x_{1},x_{2},...,x_{n}]$, $G(I)$ is lower (upper) perfect set if and only if $G(\widehat I)$ is upper (lower) perfect. Indeed, If $G(I)$ is lower perfect and suppose that there is a squarefree monomial $m\notin {\widehat I} $ of degree $n-d+1$. Then we must have a squarefree monomial $u\notin {I}$ of degree $d-1$ , which is contradiction to the fact that $G(I)$ is lower perfect set. This means that $G(I)$ is perfect set if and only if $G(\widehat I)$ is perfect set. The perfectness of $G(I)$ guaranties that the dimensions of both, the facet complex of $\widehat I$ and the non-face complex of $\widehat I$, coincide.}
\end{Remark}
Now, we recall [4, Corollary 3.6].

\begin{Corollary}
\emph{Let $I$ be a squarefree monomial ideal of $R$.\\ (1) If $f(\delta
_{\mathcal{F}}(I))=(f_{-1},f_{0},...,f_{d})$, then $$f(\delta_{\mathcal{N}}(\widehat I))=({n\choose 0}-f_{n-1},...,{n\choose i}-f_{n-i-1}),...,{n\choose {n-1}}-f_{0},{n\choose n}-f_{-1})$$ \\ (2) If $f(\delta
_{\mathcal{N}}(I))=(f_{-1},f_{0},...,f_{d})$, then $$f(\delta_{\mathcal{F}}(\widehat I))=({n\choose 0}-f_{n-1},...,{n\choose i}-f_{n-i-1}),...,{n\choose {n-1}}-f_{0},{n\choose n}-f_{-1})$$
In both cases, $f_{i} = 0$ if $i > d$.}
\end{Corollary}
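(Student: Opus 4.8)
The plan is to reduce everything to set-complementation inside $[n]=\{1,\dots,n\}$. The basic observation I would record first is that the map $S\mapsto S^{c}=[n]\setminus S$ is an involution on the subsets of $[n]$ sending each $k$-subset to an $(n-k)$-subset, so it restricts to a bijection between the $j$-dimensional subsets and the $(n-j-2)$-dimensional subsets. Since each generator $\frac{\prod_{i=1}^{n}x_{i}}{u}$ of $\widehat I$ has support exactly $F^{c}$, where $F=\{\,i: x_{i}\mid u\,\}$ is the support of $u\in G(I)$, the facets of $\delta_{\mathcal{F}}(\widehat I)$ are precisely the complements of the facets of $\delta_{\mathcal{F}}(I)$; this is the combinatorial heart of the Newton dual.

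For part (1) I would translate membership in $\widehat I$ into a containment of complements. For $T\subseteq[n]$, the monomial $\prod_{j\in T}x_{j}$ lies in $\widehat I$ if and only if some dual generator divides it, i.e.\ $F^{c}\subseteq T$ for some $u\in G(I)$ with support $F$; taking complements, this is equivalent to $T^{c}\subseteq F$, that is, $T^{c}\in\delta_{\mathcal{F}}(I)$. Hence $T$ is a face of $\delta_{\mathcal{N}}(\widehat I)$ exactly when $T^{c}$ is \emph{not} a face of $\delta_{\mathcal{F}}(I)$. Counting the $T$ with $|T|=i+1$ then yields
$$f_{i}(\delta_{\mathcal{N}}(\widehat I))={n\choose i+1}-f_{n-i-2}(\delta_{\mathcal{F}}(I)),$$
because there are ${n\choose i+1}={n\choose n-i-1}$ subsets of cardinality $n-i-1$ in total, of which exactly $f_{n-i-2}(\delta_{\mathcal{F}}(I))$ are faces of $\delta_{\mathcal{F}}(I)$. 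Reindexing by $k=i+1$ reproduces the displayed formula, with the convention $f_{j}=0$ for $j>d$ accounting for the vanishing high-degree terms.

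For part (2) I would argue dually, using that $\delta_{\mathcal{F}}(\widehat I)$ is the down-closure of the complemented facets. A subset $S$ is a face of $\delta_{\mathcal{F}}(\widehat I)$ if and only if $S\subseteq F^{c}$ for some support $F$ of a generator of $I$, equivalently $F\subseteq S^{c}$, i.e.\ $\prod_{j\in S^{c}}x_{j}\in I$, which says precisely that $S^{c}$ is a non-face of $I$, hence not a face of $\delta_{\mathcal{N}}(I)$. The same cardinality count as above then gives $f_{i}(\delta_{\mathcal{F}}(\widehat I))={n\choose i+1}-f_{n-i-2}(\delta_{\mathcal{N}}(I))$, which is the asserted identity.

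I expect the only delicate point to be bookkeeping rather than anything deep: one must keep the dimension shift $i\leftrightarrow n-i-2$ and the index shift of the $f$-vector (whose entries start at $f_{-1}$) consistent throughout, and one should verify that passing from the spanning set $\{\prod_{i}x_{i}/u : u\in G(I)\}$ to the minimal generators of $\widehat I$ does not alter the facet complex. This last point holds because the supports $F$ of a minimal generating set form an antichain under inclusion, hence so do the complements $F^{c}$, so $\{\prod_{i}x_{i}/u\}$ is already minimal and its supports are exactly the facets of $\delta_{\mathcal{F}}(\widehat I)$. Once these bookkeeping matters are settled, both formulae drop out of the single complementation bijection.
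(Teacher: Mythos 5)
Your argument is correct. Note, however, that the paper itself offers no proof of this statement: it is explicitly \emph{recalled} from [4, Corollary 3.6] (Budd and Van Tuyl), so there is no internal proof to compare against. Your complementation argument is the natural one and, as far as I can tell, is essentially the route taken in the cited source: the involution $T\mapsto T^{c}$ identifies faces of $\delta_{\mathcal{N}}(\widehat I)$ with non-faces of $\delta_{\mathcal{F}}(I)$ (and faces of $\delta_{\mathcal{F}}(\widehat I)$ with non-faces of $\delta_{\mathcal{N}}(I)$), after which both formulae are a single binomial count with the dimension shift $i\leftrightarrow n-i-2$. You also correctly dispose of the one point that could have been a gap, namely that $\{\prod_i x_i/u : u\in G(I)\}$ really is the minimal generating set of $\widehat I$ (complementation carries antichains to antichains), so the facets of $\delta_{\mathcal{F}}(\widehat I)$ are exactly the complements $F^{c}$. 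The indexing in your identity $f_{i}(\delta_{\mathcal{N}}(\widehat I))={n\choose i+1}-f_{n-i-2}(\delta_{\mathcal{F}}(I))$ matches the displayed tuple (e.g.\ $i=-1$ gives ${n\choose 0}-f_{n-1}$), and the convention $f_{j}=0$ for $j>d$ handles the entries beyond the dimension of the original complex. No gaps.
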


\begin{Theorem}
 \emph{Let $I$ be a squarefree monomial ideal of degree $d$ in a polynomial ring $R=F[x_{1},x_{2},...,x_{n}]$ and let the minimal generating set $G(I)$ of $I$ be perfect set. Then $I$ is a quasi $f$-ideal with type $(a_{-1}, a_{0}, a_{1},...,a_{d-1})\in \mathbb{Z}^{d+1}$, where $a_{i}=0$ for all $i\geq d$ if and only if $\widehat{I}$ is a quasi $f$-ideal of type $(a_{n-1}, a_{n-2}, a_{n-3}, ..., a_{-1}).$}
\end{Theorem}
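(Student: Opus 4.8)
The plan is to reduce the whole statement to the $f$-vector identity recorded in Corollary 4.4, so that the theorem becomes a matter of careful reindexing together with one structural observation about dimensions. First I would fix the standing data: since $I$ is equigenerated of degree $d$, every generator $\tfrac{x_1\cdots x_n}{u}$ of $\widehat{I}$ has degree $n-d$, so $\widehat{I}$ is equigenerated of degree $n-d$ and $\delta_{\mathcal{F}}(\widehat{I})$ has dimension $n-d-1$. Because $G(I)$ is perfect, Remark 4.3 transfers perfectness to $G(\widehat{I})$ and, crucially, guarantees $dim(\delta_{\mathcal{F}}(\widehat{I}))=dim(\delta_{\mathcal{N}}(\widehat{I}))$; this dimension equality is exactly what is needed for the phrase ``$\widehat{I}$ is a quasi $f$-ideal'' to be meaningful, i.e.\ for its type to be a well-defined vector. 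On the hypothesis side, the analogous equality $dim(\delta_{\mathcal{F}}(I))=dim(\delta_{\mathcal{N}}(I))=d-1$ (equivalently $ht(I)=n-d$) is already built into $I$ being a quasi $f$-ideal.

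For the forward direction I would write $b_j:=f_j(\delta_{\mathcal{N}}(\widehat{I}))-f_j(\delta_{\mathcal{F}}(\widehat{I}))$ for the type of $\widehat{I}$ and substitute the two halves of Corollary 4.4: part (1) gives $f_j(\delta_{\mathcal{N}}(\widehat{I}))={n\choose j+1}-f_{n-j-2}(\delta_{\mathcal{F}}(I))$ and part (2) gives $f_j(\delta_{\mathcal{F}}(\widehat{I}))={n\choose j+1}-f_{n-j-2}(\delta_{\mathcal{N}}(I))$. Subtracting, the binomial terms cancel and I obtain $b_j=f_{n-j-2}(\delta_{\mathcal{N}}(I))-f_{n-j-2}(\delta_{\mathcal{F}}(I))=a_{n-j-2}$ for every admissible $j$. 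Reading this off for $j=-1,0,1,\dots$ produces exactly the reversed vector $(a_{n-1},a_{n-2},\dots,a_{-1})$, so $\widehat{I}$ is a quasi $f$-ideal of the asserted type.

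The point requiring genuine care is the index/length bookkeeping. The honest type of $\widehat{I}$ runs only over $-1\le j\le n-d-1$, giving $(b_{-1},\dots,b_{n-d-1})=(a_{n-1},\dots,a_{d-1})$, whereas the statement displays the padded length-$(n+1)$ vector $(a_{n-1},\dots,a_{-1})$. To see that the padding is legitimate I would use lower perfectness: it forces every $(d-2)$-subset to be a face of $\delta_{\mathcal{F}}(I)$, hence (faces being closed under subsets) all lower faces are present, so $a_i=0$ for all $i\le d-2$; combined with the hypothesis $a_i=0$ for $i\ge d$, the only possibly nonzero entry is $a_{d-1}$, which lands in the top slot $b_{n-d-1}$, and all the out-of-range entries $a_{d-2},\dots,a_{-1}$ that pad the displayed vector are indeed zero. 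For the converse I would invoke the involution $\widehat{\widehat{I}}=I$ (dualizing twice returns each generator) together with the symmetry of perfectness in Remark 4.3, applying the forward direction to $\widehat{I}$ and reversing the type once more; alternatively the same cancellation, read in the opposite direction, recovers that $I$ is a quasi $f$-ideal of type $(a_{-1},\dots,a_{d-1})$.

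I expect the only real obstacle to be precisely this reindexing: confirming that the reversal in Corollary 4.4 deposits each $a_{n-j-2}$ in the correct coordinate and that the vanishing of the out-of-range coordinates (via perfectness) makes the padded vector $(a_{n-1},\dots,a_{-1})$ internally consistent with $\widehat{I}$ having genuine type length $n-d+1$. The algebra itself is a one-line cancellation once Corollary 4.4 is invoked, and the dimension matching is handed to us by Remark 4.3, so all the substantive content is already available.
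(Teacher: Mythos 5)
Your proposal is correct and follows essentially the same route as the paper: both obtain $f_j(\delta_{\mathcal{N}}(\widehat{I}))-f_j(\delta_{\mathcal{F}}(\widehat{I}))=a_{n-j-2}$ by combining the two parts of Corollary 4.4, with Remark 4.3 supplying the transfer of perfectness and the dimension match. Your extra bookkeeping --- using lower perfectness to force $a_i=0$ for $i\le d-2$ so that the padded reversed vector is internally consistent, and handling the converse via $\widehat{\widehat{I}}=I$ --- is in fact more careful than the paper's own proof, which runs the cancellation over all $j\in\{-1,0,\dots,n-1\}$ and dismisses the reverse implication in one line without addressing these points.
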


\begin{proof}
If $I$ is a quasi $f$-ideal with type $(a_{-1}, a_{0}, a_{1},...,a_{d-1})\in \mathbb{Z}^{d+1}$, then $f_{i}(\delta _{\mathcal{F}}(I))= f_{i}(\delta_{\mathcal{N}}(I))-a_{i}$ for all $i\in \{-1.0,1,...,d-1\}$. This means that the $f$-vector of facet complex of $I$ will be $(f_{-1}(\delta_{\mathcal{N}}(I))-a_{-1}, (f_{0}(\delta_{\mathcal{N}}(I))-a_{0}, ..., (f_{d-1}(\delta_{\mathcal{N}}(I))-a_{d-1})$. Now, using [4, Corollary 3.6], we have
\begin{equation}
    \label{eq-1}f_{j}(\delta _{\mathcal{N}}(\widehat{I}))={n\choose {j+1}}- f_{n-j-2}(\delta_{\mathcal{N}}(I))+a_{n-j-2}\end{equation} where $j\in \{-1.0,1,...,n-1\}$. It is noted that if $(f_{-1}, f_{0}, ..., f_{d-1})$ is $f$-vector of $\delta_{\mathcal{N}}(I)$, then from [4, Corollary 3.6],  we get \begin{equation}
    \label{eq-1}f_{j}(\delta _{\mathcal{F}}(\widehat{I}))={n\choose {j+1}}- f_{n-j-2}(\delta_{\mathcal{N}}(I))\end{equation} where $j\in \{-1.0,1,...,n-1\}$. From equation (3) and equation (4), we have
    \begin{equation}
    \label{eq-1}f_{j}(\delta _{\mathcal{N}}(\widehat{I}))=f_{j}(\delta_{\mathcal{F}}(\widehat{I}))+a_{n-j-2}\end{equation} This implies $f_{j}(\delta _{\mathcal{N}}(\widehat{I}))-f_{j}(\delta_{\mathcal{F}}(\widehat{I}))=a_{n-j-2}$ for all $j\in \{-1.0,1,...,n-1\}$. This shows that $\widehat{I}$ is a quasi $f$-ideal of type $(a_{n-1}, a_{n-2}, a_{n-3},..., a_{-1}).$ Similarly for reveres implication, we simply replace $I$ with $\widehat{I}$.

\end{proof}

\begin{Corollary}
\emph{Let $I$ be a squarefree monomial ideal of a polynomial ring $R=F[x_{1},x_{2},...,x_{n}]$. Then $I$ is a quasi $f$-ideal of type $\textbf{0}$-vector if and only if $\widehat{I}$ is a quasi $f$-ideal of type $\textbf{0}$-vector.}

\end{Corollary}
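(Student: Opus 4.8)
The plan is to obtain the two $f$-vectors of the complexes attached to $\widehat{I}$ directly from those of $I$ by substituting the quasi $f$-ideal relation into Corollary 4.4, and then to read off the type of $\widehat{I}$ from the resulting difference. Before any $f$-vector manipulation I would first invoke Remark 4.3: since $G(I)$ is perfect, so is $G(\widehat{I})$, and this forces $\dim \delta_{\mathcal{F}}(\widehat{I}) = \dim \delta_{\mathcal{N}}(\widehat{I})$. This is exactly what makes $\widehat{I}$ eligible to be called a quasi $f$-ideal in the first place, so it reduces the theorem to a single identity between $f$-vectors.

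Assume $I$ is a quasi $f$-ideal of type $(a_{-1}, a_{0}, \ldots, a_{d-1})$, so that $f_i(\delta_{\mathcal{F}}(I)) = f_i(\delta_{\mathcal{N}}(I)) - a_i$ for every $i$ (with $a_i = 0$ for $i \geq d$). The heart of the argument is to apply Corollary 4.4 twice, each time with a different input vector. Feeding the facet $f$-vector of $I$ into part (1) and substituting the relation above gives, for each $j \in \{-1,0,\ldots,n-1\}$,
$$f_j(\delta_{\mathcal{N}}(\widehat{I})) = {n \choose j+1} - f_{n-j-2}(\delta_{\mathcal{N}}(I)) + a_{n-j-2}.$$
Feeding the non-face $f$-vector of $I$ into part (2) gives
$$f_j(\delta_{\mathcal{F}}(\widehat{I})) = {n \choose j+1} - f_{n-j-2}(\delta_{\mathcal{N}}(I)).$$
The binomial terms and the $f_{n-j-2}(\delta_{\mathcal{N}}(I))$ terms are identical, so subtracting cancels everything except the correction term:
$$f_j(\delta_{\mathcal{N}}(\widehat{I})) - f_j(\delta_{\mathcal{F}}(\widehat{I})) = a_{n-j-2} \quad \text{for all } j.$$

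Reading this across $j = -1, 0, \ldots, n-1$, the index substitution $j \mapsto n-j-2$ reverses the order of coordinates, so the difference vector of $\widehat{I}$ is precisely $(a_{n-1}, a_{n-2}, \ldots, a_{-1})$; that is, $\widehat{I}$ is a quasi $f$-ideal of the asserted type. For the converse I would run the identical computation with $I$ replaced by $\widehat{I}$, using the involution $\widehat{\widehat{I}} = I$ together with the equivalence of perfectness for $G(I)$ and $G(\widehat{I})$ from Remark 4.3; the index reversal is its own inverse, so the type reverses back to $(a_{-1}, \ldots, a_{d-1})$.

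I expect the only genuine obstacle to be the index bookkeeping, specifically reconciling the range of $j$ with the dimension of $\widehat{I}$. Since $\widehat{I}$ is generated in degree $n-d$, both of its complexes have dimension $n-d-1$, so the difference above must vanish for $j > n-d-1$, i.e. $a_{n-j-2} = 0$ there, which says $a_i = 0$ for $i \leq d-2$. This is exactly what perfectness supplies beyond the formal cancellation: lower perfectness of $G(I)$ yields $f_i(\delta_{\mathcal{F}}(I)) = {n \choose i+1}$ for $i \leq d-2$, and combined with Lemma 3.1 this makes those $a_i$ vanish. Verifying this consistency — that the only surviving coordinate of the type of $I$ is $a_{d-1}$, and that it lands in the slot $j = n-d-1$ at the top dimension of $\widehat{I}$ — is the one place where perfectness does real work, and it is where I would be most careful to get the offsets right.
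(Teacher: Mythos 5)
Your argument has a genuine gap at its very first step. The corollary, unlike Theorem~4.5, carries \emph{no} hypothesis that $G(I)$ is perfect; its entire content beyond the theorem is that the type-$\mathbf{0}$ condition supplies perfectness for free. You write ``since $G(I)$ is perfect, so is $G(\widehat{I})$'' and proceed from there, which silently reinstates the dropped hypothesis and assumes exactly what must be established. The paper's proof consists of precisely the missing step: a quasi $f$-ideal of type $\mathbf{0}$ is an $f$-ideal, and the characterization of $f$-ideals (cf.\ Corollary~3.3, or [3, Theorem 3.9]) forces $G(I)$ to be a perfect set; after that, everything follows by citing Theorem~4.5 with all $a_i=0$, since reversing the zero vector returns the zero vector. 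The long computation you carry out with general $a_i$ is just a re-derivation of the proof of Theorem~4.5 --- it is correct, but it is not where the work of this corollary lies.

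The gap is closable, and you come close to seeing how in your final paragraph, but you run the implication in the wrong direction. What is needed here is: from $a_i=0$ for $i\le d-2$ and Lemma~3.1, conclude $f_i(\delta_{\mathcal{F}}(I))=\binom{n}{i+1}$ for $i\le d-2$, which is lower perfectness of $G(I)$; and from the equality $\dim\delta_{\mathcal{F}}(I)=\dim\delta_{\mathcal{N}}(I)$ (forced by $I$ being a quasi $f$-ideal at all), conclude that no squarefree monomial of degree $d+1$ lies outside $I$, which is upper perfectness. You instead invoke lower perfectness to deduce the vanishing of the $a_i$, treating perfectness as the given and the type as the conclusion --- appropriate for analyzing Theorem~4.5, but backwards for this corollary. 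The same issue recurs in your converse direction, where the perfectness of $G(\widehat{I})$ must likewise be extracted from $\widehat{I}$ being of type $\mathbf{0}$ rather than assumed.
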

\begin{proof}
It is obvious. Since,  a quasi $f$-ideal $I$ of type $\textbf{0}$-vector is an $f$-ideal. It implies that the minimal generating set $G(I)$ of $I$ is perfect. Hence, we have needed to show follows from the above theorem.

\end{proof}

\begin{Example}
\emph{Let us consider the ideal $I$ in the polynomial ring $F[x_{1},x_{2},x_{3},x_{4},x_{5},x_{6}]$ given by $I=(x_{1}x_{2},x_{1}x_{3},x_{2}x_{3},x_{4}x_{5},x_{4}x_{6},x_{5}x_{6},x_{2}x_{4},x_{3}x_{5})$. It is easy to see that $I$ is a quasi $f$-ideal of type $(0,0,-1)$ $($ or it can be written as $(0,0,-1,0,0,0,0),$ here $a_{-1}=0$, $a_{0}=0$, $a_{1}=-1$, $a_{2}=0$, $a_{3}=0$, $a_{4}=0$ and $a_{5}=0)$. \\ Clearly the Newton complementary dual $\widehat{I}$ of $I$ is generated by the monomials $ \{x_{3}x_{4}x_{5}x_{6},x_{2}x_{4}x_{5}x_{6},x_{1}x_{4}x_{5}x_{6},x_{1}x_{2}x_{3}x_{6},x_{1}x_{2}x_{3}x_{5}, x_{1}x_{2}x_{3}x_{4},x_{1}x_{3}x_{5}x_{6},x_{1}x_{2}x_{4}x_{6}\}$. The primary decomposition of $\widehat{I}$ is $(x_{2},x_{5})\bigcap (x_{3},x_{4})\bigcap (x_{2},x_{6})\bigcap (x_{3},x_{6})\bigcap (x_{1},x_{4})\bigcap \\(x_{1},x_{5})\bigcap (x_{1},x_{6})\bigcap (x_{1},x_{2},x_{3})\bigcap (x_{4},x_{5},x_{6})$.
It is easy to calculate that $f(\delta _{\mathcal{F}}(\widehat{I}))=(1,6,15,20,8)$ and $f(\delta _{\mathcal{N}}(\widehat{I}))=(1,6,15,20,7)$. Thus $\widehat{I}$ is a quasi $f$-ideal with type $(0,0,0,0,-1)$ instead of $(0,0,0,0,-1,0,0)\leftrightarrow (a_{5},a_{4},a_{3},a_{2},a_{1},a_{0},a_{-1})$}
\end{Example}

\end{document}